\newtheorem{lemma}{Lemma}
\newtheorem{theorem}{Theorem}
\def\blfootnote{\xdef\@thefnmark{}\@footnotetext}\makeatother
\title{\bf On sequences with prescribed metric discrepancy behavior}
\author{Christoph Aistleitner} 
\address{ Institute of Financial Mathematics and applied Number Theory, University Linz}
\email{christoph.aistleitner@jku.at}
\author{Gerhard Larcher} 
\address{ Institute of Financial Mathematics and applied Number Theory, University Linz}
\email{gerhard.larcher@jku.at}
\thanks{The first author is supported by a Schr\"odinger scholarship of the Austrian Science Fund (FWF). The second author is supported by the Austrian Science Fund (FWF): Project F5507-N26, which is part of the Special Research Program ``Quasi-Monte Carlo Methods: Theory and Applications''}
\begin{document}

\begin{abstract}
An important result of H. Weyl states that for every sequence $\left(a_{n}\right)_{n\geq 1}$ of distinct positive integers the sequence of fractional parts of $\left(a_{n} \alpha \right)_{n \geq1}$ is uniformly distributed modulo one for almost all $\alpha$. However, in general it is a very hard problem to calculate the precise order of convergence of the discrepancy $D_{N}$ of $\left(\left\{a_{n} \alpha \right\}\right)_{n \geq 1}$ for almost all $\alpha$. By a result of R. C. Baker this discrepancy always satisfies $N D_{N} = \mathcal{O} \left(N^{\frac{1}{2}+\varepsilon}\right)$ for almost all $\alpha$ and all $\varepsilon >0$. In the present note for arbitrary $\gamma \in \left(0, \frac{1}{2}\right]$ we construct a sequence $\left(a_{n}\right)_{n \geq 1}$ such that for almost all $\alpha$ we have $ND_{N} = \mathcal{O} \left(N^{\gamma}\right)$ and $ND_{N} = \Omega \left(N^{\gamma-\varepsilon}\right)$ for all $\varepsilon > 0$, thereby proving that any prescribed metric discrepancy behavior within the admissible range can actually be realized.
\end{abstract}

\date{}
\maketitle

\section{Introduction} \label{sect_1}
H. Weyl~\cite{wey} proved that for every sequence $\left(a_{n}\right)_{n \geq 1}$ of distinct positive integers the sequence $\left(\left\{a_{n} \alpha\right\}\right)_{n \geq 1}$ is uniformly distributed modulo one for almost all reals $\alpha$. Here, and in the sequel, $\{ \cdot \}$ denotes the fractional part function. The speed of convergence towards the uniform distribution is measured in terms of the discrepancy, which -- for an arbitrary sequence $\left(x_{n}\right)_{n \geq 1}$ of points in $\left[ \left. 0,1\right.\right)$ -- is defined by
$$
D_N = D_{N} (x_1, \dots, x_N) = \underset{0 \leq a < b \leq 1}{\sup} \left|\frac{\mathcal{A}_{N}\left(\left[\left.a,b\right)\right.\right)}{N} - \left(b-a\right)\right|,
$$
where $\mathcal{A}_{N}\left(\left[\left.a,b\right)\right.\right) := \# \left\{1 \leq n \leq N \left| \right. x_{n} \in \left[\left. a,b\right.\right)\right\}.$  For a given sequence $\left(a_{n}\right)_{n \geq 1}$ it is usually a very hard and challenging problem to give sharp estimates for the discrepancy $D_{N}$ of $\left(\left\{a_{n} \alpha\right\}\right)_{n \geq 1}$ valid for almost all $\alpha$. For general background on uniform distribution theory and discrepancy theory see for example the monographs~\cite{dts,knu}.\\

A famous result of R. C. Baker~\cite{baker} states that for any sequence $\left(a_{n}\right)_{n \geq 1}$ of distinct positive integers for the discrepancy $D_{N}$ of $\left(\left\{a_{n} \alpha\right\}\right)_{n \geq 1}$ we have 
\begin{equation} \label{B*}
ND_{N}=\mathcal{O} \left(N^{\frac{1}{2}} \left(\log N\right)^{\frac{3}{2}+\varepsilon}\right) \qquad \textrm{as $N \to \infty$}
\end{equation}
for almost all $\alpha$ and for all $\varepsilon >0$.\\

Note that~\eqref{B*} is a general upper bound which holds for \emph{all} sequences $(a_n)_{n \geq 1}$; however, for some specific sequences the precise typical order of decay of the discrepancy of $(\{a_n \alpha\})_{n \geq 1}$ can differ significantly from the upper bound in~\eqref{B*}. The fact that~\eqref{B*} is essentially optimal (apart from logarithmic factors) as a general result covering all possible sequences can for example be seen by considering so-called lacunary sequences $\left(a_{n}\right)_{n \geq 1}$, i.e., sequences for which $\frac{a_{n+1}}{a_{n}} \geq 1+\delta$ for a fixed $\delta > 0$ and all $n$ large~enough. In this case for $D_{N}$ we have
$$
\frac{1}{4 \sqrt{2}} \leq \underset{N \rightarrow \infty}{\lim \sup} \frac{ND_{N}}{\sqrt{2 N \log \log N}} \leq c_{\delta}
$$
for almost all $\alpha$ (see~\cite{philipp}), which shows that the exponent $1/2$ of $N$ on the right-hand side of~\eqref{B*} cannot be reduced for this type of sequence. For more information concerning possible improvements of the logarithmic factor in~\eqref{B*}, see~\cite{bpt}.\\

Quite recently in~\cite{aist-lar} it was shown that also for a large class of sequences with polynomial growth behavior Baker's result is essentially best possible. For example, the following result was shown there: Let $f \in \mathbb{Z} \left[x\right]$ be a polynomial of degree larger or equal to 2. Then for the discrepancy $D_{N}$ of $\left(\left\{f (n) \alpha\right\}\right)_{n \geq 1}$ for almost all $\alpha$ and for all $\varepsilon > 0$ we have 
$$
N D_{N} = \Omega \left( N^{\frac{1}{2}-\varepsilon} \right).
$$

On the other hand there is the classical example of the Kronecker sequence, i.e., $a_{n} = n$, which shows that the actual metric discrepancy behavior of $(\{a_n \alpha\})_{n \geq 1}$ can differ vastly from the general upper bound in~\eqref{B*}. Namely, for the discrepancy of the sequence $\left(\left\{n \alpha\right\}\right)_{n \geq 1}$ for almost all $\alpha$ and for all $\varepsilon > 0$ we have
\begin{equation} \label{K*}
ND_{N} = \mathcal{O}\left(\log N \left(\log \log N\right)^{1+\varepsilon}\right),
\end{equation}
which follows from classical results of Khintchine in the metric theory of continued fractions (for even more precise results, see~\cite{schoi}). The estimate~\eqref{K*} of course also holds for $a_{n} = f(n)$ with $f \in \mathbb{Z} \left[x\right]$ of degree 1. In~\cite{aist-lar} further examples for $\left(a_{n}\right)_{n \geq 1}$ were given, where $\left(a_{n}\right)_{n \geq 1}$ has polynomial growth behavior of arbitrary degree, such that for the discrepancy of $\left(\left\{a_{n} \alpha \right\}\right)_{n \geq 1}$ we have 
$$
ND_{N} = \mathcal{O}\left(\left(\log N\right)^{2+\varepsilon}\right)
$$
for almost all $\alpha$ and for all $\varepsilon >0$; see there for more details.\\

These results may seduce to the hypothesis that for all choices of $\left(a_{n}\right)_{n \geq 1}$ for the discrepancy of $\left(\left\{a_{n} \alpha \right\}\right)_{n \geq 1}$ for almost all $\alpha$ we either have
\begin{equation} \label{equ_a}
ND_{N} = \mathcal{O} \left(N^{\varepsilon}\right)
\end{equation}
or
\begin{equation} \label{equ_b}
ND_{N} = \Omega\left(N^{\frac{1}{2}-\varepsilon}\right).
\end{equation}
This hypothesis, however, is wrong as was shown in~\cite{AHL}: Let $\left(a_{n}\right)_{n \geq 1}$ be the sequence of those positive integers with an even sum of digits in base 2, sorted in increasing order; that is $(a_n)_{n \geq 1} = (3,5,6,9,10,\dots)$. Then for the discrepancy of $\left(\left\{a_{n} \alpha\right\}\right)_{n \geq 1}$for almost all $\alpha$ we have 
$$
ND_{N} = \mathcal{O}\left(N^{\kappa+\varepsilon}\right)
$$
and
$$ND_{N} = \Omega\left(N^{\kappa- \varepsilon}\right)
$$
for all $\varepsilon >0$, where $\kappa$ is a constant with $\kappa \approx 0,404$. Interestingly, the precise value of $\kappa$ is unknown; see~\cite{fru} for the background.\\

The aim of the present paper is to show that the example above is not a singular counter-example, but that indeed ``everything'' between~\eqref{equ_a} and~\eqref{equ_b} is possible. More precisely, we will show the following theorem.

\begin{theorem} \label{th_a}
Let $0 < \gamma \leq \frac{1}{2}$. Then there exists a strictly increasing sequence $\left(a_{n}\right)_{n \geq 1}$ of positive integers such that for the discrepancy of the sequence $\left(\left\{a_{n} \alpha \right\}\right)_{n \geq 1}$ for almost all $\alpha$ we have 
$$
ND_{N} = \mathcal{O}\left(N^{\gamma}\right)
$$
and
$$
ND_{N} = \Omega\left(N^{\gamma-\varepsilon}\right)
$$
for all $\varepsilon >0$.
\end{theorem}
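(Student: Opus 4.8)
The plan is to realize the prescribed exponent $\gamma$ by a single explicit ``block'' construction in which $\gamma$ is built in through the block lengths, so that the order $N^{\gamma}$ is forced from below by a resonance (divergence Borel--Cantelli) argument and from above by a second moment estimate. Concretely, I would take $(a_n)$ to be the increasing enumeration of a disjoint union of blocks $B_1,B_2,\dots$, where $B_k$ is the arithmetic progression $\{\Lambda_k,2\Lambda_k,\dots,H_k\Lambda_k\}$ of length $H_k$ sitting at a scale $\Lambda_k$, and where the scales grow so rapidly ($\Lambda_{k+1}\gg H_k\Lambda_k$, say doubly exponentially) that the blocks become ``independent'' in the metric sense. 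Writing $N_k=H_1+\dots+H_k$ for the stage at which $B_k$ is completed, I would fix the lengths by the self-consistent rule $H_k\approx N_k^{\gamma}$, which solves (in the continuous approximation) to $N_k\approx((1-\gamma)k)^{1/(1-\gamma)}$ and $H_k\approx((1-\gamma)k)^{\gamma/(1-\gamma)}$. The decisive bookkeeping fact is that $\sum_k H_k^{-1}\approx\sum_k k^{-\gamma/(1-\gamma)}$ diverges \emph{exactly} when $\gamma\le\tfrac12$, so the admissible range in the theorem is precisely the range in which the construction can sustain infinitely many resonances; this is the same threshold as in Baker's bound~\eqref{B*}.

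For the lower bound I would argue through the exponential sums $S_{N,m}(\alpha)=\sum_{n\le N}e(m a_n\alpha)$. The contribution of $B_k$ to $S_{N,1}$ has modulus $\big|\sum_{j\le H_k}e(\Lambda_k j\alpha)\big|=|\sin(\pi H_k\Lambda_k\alpha)|/|\sin(\pi\Lambda_k\alpha)|$, which is $\approx H_k$ whenever $\alpha$ lies in the ``resonance set'' $R_k$ of those $\alpha$ with $\|\Lambda_k\alpha\|\lesssim H_k^{-1}$, a set of measure $\asymp H_k^{-1}$, and is $O(1)$ for typical $\alpha$. Since the scales are super-separated the events $\{\alpha\in R_k\}$ are (quasi-)independent, and because $\sum_k|R_k|\asymp\sum_k H_k^{-1}=\infty$ the divergence Borel--Cantelli lemma gives, for almost all $\alpha$, infinitely many $k$ with $\alpha\in R_k$. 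At such a stage $N=N_k$ the resonant block dominates the (typically $O(1)$) contributions of the other blocks, so $|S_{N_k,1}(\alpha)|\gtrsim H_k\approx N_k^{\gamma}$; combined with the elementary Koksma-type inequality $|S_{N,1}(\alpha)|\le C\,N D_N$ this yields $\limsup_N N D_N/N^{\gamma}>0$, hence $N D_N=\Omega(N^{\gamma-\varepsilon})$ for every $\varepsilon>0$.

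For the upper bound I would use the Erdős--Turán inequality $N D_N\lesssim N/M+\sum_{m\le M}m^{-1}|S_{N,m}(\alpha)|$ with $M\approx N$, bounding each block contribution by $\min\!\big(H_k,\tfrac{1}{2}\|m\Lambda_k\alpha\|^{-1}\big)$. The task is then to show that for almost all $\alpha$ and for \emph{every} $N$ one has $\sum_{m\le M}m^{-1}\sum_{k\le K}\min\!\big(H_k,\|m\Lambda_k\alpha\|^{-1}\big)\lesssim N^{\gamma}$. Away from its resonance set each block contributes only a convergent ``Kronecker'' amount $O((\log H_k)(\log\log H_k)^{1+\varepsilon})$ for a.e. $\alpha$, so the whole difficulty lies in controlling the resonant excursions: one must prove that, although infinitely many blocks do resonate, for almost every $\alpha$ the resonances never pile up, i.e. at each stage the total resonant contribution is dominated by the single largest active block, of size $\approx N^{\gamma}$. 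I expect this to be the main obstacle, and I would attack it with a second moment (quasi-orthogonality) estimate for $\sum_{m}m^{-1}S_{N,m}$ exploiting the rapid growth of the scales $\Lambda_k$, followed by a maximal inequality along the sequence of stages $N_k$ and a convergence Borel--Cantelli lemma to pass to all $\alpha$ and all intermediate $N$. The super-separation of the scales is what makes the cross terms between distinct blocks negligible and keeps the exponent pinned at the prescribed value $\gamma$, rather than at the uncomputable exponents (as in the example of~\cite{AHL}) that arise when many scales interact.
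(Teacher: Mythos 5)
Your construction is genuinely different from the paper's, and it contains a gap that I do not think is repairable in the form you describe. The crux is your choice $H_k\approx N_k^{\gamma}$, which forces every block to be a vanishing fraction of the history: at stage $N_k$ there are $k\approx N_k^{1-\gamma}$ completed blocks, and for $\gamma<\tfrac12$ we have $N^{1-\gamma}\gg N^{\gamma}$. Consequently the triangle inequality over blocks is hopeless for the upper bound (summing even the \emph{typical} per-block contribution $\asymp\log H_j$ of $\min\bigl(H_j,\|m\Lambda_j\alpha\|^{-1}\bigr)$ over $j\le k$ and $m$ already exceeds $N^{\gamma}$), so your entire upper bound rests on genuine cancellation among $\asymp N^{1-\gamma}$ blocks, uniformly in $N$, for almost every $\alpha$. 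You flag this as ``the main obstacle,'' but the second-moment route you propose is unlikely to close it: the variables $\min\bigl(H_j,\|m\Lambda_j\alpha\|^{-1}\bigr)$ are heavy-tailed with exponent $1$, their second moments ($\asymp H_j$, summing to $N$) are dominated by the rare resonance events and only reproduce Baker's $N^{1/2}$ bound, and the Erd\H{o}s--Tur\'an inequality puts absolute values on each $S_{N,m}$, so no cancellation across frequencies is available. The same unproven cancellation also undermines your lower bound: the claim that the non-resonant blocks contribute ``typically $O(1)$'' is false in aggregate, since $\sum_{j\le k}H_j^{-1}\to\infty$ means that at a typical stage several earlier blocks are still resonating, the largest of them at level comparable to $H_k$ itself, so the new resonance is not automatically dominant.

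The paper avoids all of this with a much blunter device. It alternates only two kinds of blocks: quadratic blocks $A+1^2,\dots,A+e_l^2$ of length $e_l\approx F_l^{2\gamma}/\log(F_l^{2\gamma})$, which carry both the upper bound $e_l^{1/2}(\log e_l)^{3/8}\approx F_l^{\gamma}$ (Fiedler--Jurkat--K\"orner) and the lower bound (Behnke's $\bigl|\sum_{n\le Y_l}e^{2\pi i n^2\alpha}\bigr|\ge c\sqrt{q_l}$, valid at \emph{every} convergent denominator, hence needing no Borel--Cantelli), and Kronecker blocks of length $m_l$ chosen so enormous that $(\log m_l)^2\ge E_{l-1}$. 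The latter condition lets the entire past be disposed of by the trivial bound $E_{l-1}D_{E_{l-1}}\le E_{l-1}\le(\log N)^2$, so the discrepancy at every stage is governed by a \emph{single} active block plus an error of size $(\log N)^2$, and no inter-block cancellation is ever required. If you want to salvage your approach, the minimal fix is to import this idea: make each new block so long that the cumulative count of all previous elements is $O\bigl((\log N)^{2}\bigr)$, at which point the superlinear-scale bookkeeping and the Borel--Cantelli machinery become unnecessary -- but then you also need a block type whose discrepancy genuinely grows like the square root of its length, which is exactly why the paper reaches for $(\{n^2\alpha\})$ rather than for arithmetic progressions, whose discrepancy for almost all $\alpha$ is only polylogarithmic in the block length.
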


\section{Proof of the Theorem} \label{th_b}

For the proof we need an auxiliary result which easily follows from classical work of H. Behnke~\cite{beh}. 

\begin{lemma} \label{lem_a}
Let $\left(e_{k}\right)_{k \geq 1}$ be a strictly increasing sequence of positive integers. Let $\varepsilon > 0$. Then for almost all $\alpha$ there is a constant $K\left(\alpha, \varepsilon \right) > 0$ such that for all $r \in \mathbb{N}$ there exist $M_{r} \leq e_{r}$ such that for the discrepancy of the sequence $\left(\left\{n^{2} \alpha\right\}\right)_{n \geq 1}$ we have
$$
M_{r} D_{M_{r}} \geq K\left(\alpha, \varepsilon\right) \sqrt{\frac{e_{r}}{\left(\log e_{r}\right)^{1+\varepsilon}}}.
$$
\end{lemma}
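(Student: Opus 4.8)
The plan is to deduce the lemma from a metric lower bound for the \emph{quadratic Weyl sums}
$$
S_M(\alpha) := \sum_{n=1}^{M} e^{2\pi i n^2 \alpha},
$$
using the standard link between exponential sums and discrepancy. Applying Koksma's inequality to the functions $x \mapsto \cos 2\pi x$ and $x\mapsto \sin 2\pi x$ (each of bounded total variation on $[0,1]$) yields, for every $M$,
$$
M D_M \geq c_0 \left| S_M(\alpha) \right|
$$
with an absolute constant $c_0 > 0$, where $D_M$ is the discrepancy of $\left(\left\{n^2\alpha\right\}\right)_{n \geq 1}$ and we used $e^{2\pi i \{n^2\alpha\}} = e^{2\pi i n^2\alpha}$. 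Hence it suffices to produce, for almost all $\alpha$ and every $r$, an index $M_r \leq e_r$ for which $\left|S_{M_r}(\alpha)\right| \geq K(\alpha,\varepsilon)\sqrt{e_r/(\log e_r)^{1+\varepsilon}}$; the lemma then follows after relabelling constants.

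The essential input is the following metric lower bound, which is the content of Behnke's analysis of $\left(\left\{n^2\alpha\right\}\right)_{n\geq1}$: for almost all $\alpha$ there is a constant $c(\alpha,\varepsilon)>0$ such that
$$
\max_{1 \leq M \leq N} \left| S_M(\alpha) \right| \geq c(\alpha,\varepsilon)\sqrt{\frac{N}{(\log N)^{1+\varepsilon}}} \qquad \text{for all large } N.
$$
The mechanism behind such a bound is that at a rational $p/q$ in lowest terms the complete Gauss sum satisfies $\left|\sum_{n=1}^{q} e^{2\pi i n^2 p/q}\right| \asymp \sqrt{q}$, and that this coherence persists for the incomplete sums $S_M(\alpha)$ when $\alpha$ lies close to $p/q$, with $p/q$ ranging over the continued fraction convergents of $\alpha$. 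Summing the $q$-periodic blocks and tracking the slowly varying quadratic phase shows that, for $M$ up to roughly $\sqrt{q_k q_{k+1}}$, the quantity $\left|S_M(\alpha)\right|$ can be made of order $\sqrt{M}$; a Borel--Cantelli argument over the partial quotients of $\alpha$ (equivalently, the metric theory of the Gauss map) then controls the exceptional scales and supplies the logarithmic correction valid for \emph{all} large $N$.

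Granting this, the proof concludes at once: given the prescribed sequence $(e_r)_{r\geq1}$, fix an $\alpha$ in the full-measure set above, apply the displayed bound with $N = e_r$, and let $M_r \leq e_r$ be an index attaining the maximum. Combining with $M_r D_{M_r} \geq c_0 |S_{M_r}(\alpha)|$ gives the claim with $K(\alpha,\varepsilon)=c_0\,c(\alpha,\varepsilon)$. I expect the only genuinely delicate point to be the metric Weyl-sum estimate itself, namely the analysis of incomplete Gauss sums with a perturbed frequency and the uniformity over \emph{all} (rather than infinitely many) scales $N$; this is precisely why one invokes Behnke's classical work instead of reproving it, the remaining steps being the routine Koksma reduction and the choice $N=e_r$. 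Minor nuisances, such as the vanishing of the Gauss sum when $q\equiv 2\ (\mathrm{mod}\ 4)$ or the occasional very large partial quotient, are absorbed into the almost-everywhere quantifier and the logarithmic factor.
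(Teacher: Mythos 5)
Your proposal is correct and follows essentially the same route as the paper: reduce to the quadratic Weyl sum via Koksma's inequality, invoke Behnke's lower bound $\left|S_{Y_l}(\alpha)\right| \gg \sqrt{q_l}$ at the continued fraction convergents, and use the almost-everywhere bound on the partial quotients (hence $q_{l+1} \ll q_l (\log q_l)^{1+\varepsilon}$) to convert this into a bound at every scale $N=e_r$ with the logarithmic loss. The only cosmetic difference is that you package the deterministic Gauss-sum input and the metric continued-fraction input into a single maximal inequality attributed to ``Behnke's analysis,'' whereas the paper separates the two ingredients explicitly; the underlying argument is identical.
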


\begin{proof}
For $\alpha \in \mathbb{R}$ let $a_{k} \left(\alpha\right)$ denote the $k$-th continued fraction coefficient in the continued fraction expansion of $\alpha$. Then it is well-known that for almost all $\alpha$ we have $a_{k}(\alpha) = \mathcal{O} \left(k^{1+\varepsilon}\right)$ for all $\varepsilon >0$. Let $\varepsilon >0$ be given and let $\alpha$ and $c\left(\alpha, \varepsilon\right)$ be such that 
\begin{equation} \label{equ_c}
a_{k} (\alpha) \leq c\left(\alpha, \varepsilon\right) k^{1+\varepsilon}
\end{equation}
 for all $k \geq 1$.\\

Let $q_{l}$ the $l$-th best approximation denominator of $\alpha$. Then
\begin{equation} \label{**}
q_{l+1} \leq \left(c\left(\alpha, \varepsilon\right) l^{1+\varepsilon}+1\right) q_{l}.
\end{equation}
Since $q_{l} \geq 2^{\frac{l}{2}}$ in any case, we have $l \leq \frac{2\log q_{l}}{\log 2}$, and we obtain 
\begin{equation} \label{***}
q_{l+1} \leq c_{1} \left(\alpha, \varepsilon\right) q_{l} \left(\log q_{l}\right)^{1+\varepsilon},
\end{equation}
for an appropriate constant $c_{1} \left(\alpha, \varepsilon\right)$. In~\cite{beh} it was shown in Satz XVII that for every real $\alpha$ we have
$$
\left| \sum^{N}_{n=1} e^{2 \pi i n^{2} \alpha} \right| = \Omega\left(N^{\frac{1}{2}}\right).
$$
Indeed, if we follow the proof of this theorem we find that even the following was shown: For every $\alpha$ and for every best approximation denominator $q_{l}$ of $\alpha$ there exists an $Y_{l} < \sqrt{q_{l}}$ such that $\left| \sum^{Y_{l}}_{n=1} e^{2 \pi i n^{2}\alpha} \right| \geq c_{\textup{abs}} \sqrt{q_{l}}$. Here $c_{\textup{abs}}$ is a positive absolute constant (not depending on~$\alpha$).\\

Let now $r \in \mathbb{N}$ be given and let $l$ be such that $q_{l} \leq e_{r} < q_{l+1}$, and let $M_{r} := Y_{l}$ from above. Then by~\eqref{**} and~\eqref{***} we obtain, for an appropriate constant $c_{2}\left(\alpha, \varepsilon\right)$,
\begin{eqnarray*}
\left| \sum^{M_{r}}_{n=1} e^{2 \pi i n^{2} \alpha} \right| & \geq & c_{\textup{abs}} \sqrt{q_{l}}\\
& \geq & c_{2} \left(\alpha, \varepsilon\right) \sqrt{\frac{q_{l+1}}{\left(\log q_{l}\right)^{1+\varepsilon}}}\\
& \geq & c_{2}\left(\alpha, \varepsilon\right) \sqrt{\frac{e_{l}}{\left(\log e_{l}\right)^{1+\varepsilon}}}.
\end{eqnarray*}
By the fact that (see Chapter 2, Corollary 5.1 of~\cite{knu})
$$
M_{r} D_{M_{r}} \geq \frac{1}{4} \left| \sum^{M_{r}}_{n=1} e^{2 \pi i n^{2} \alpha} \right|,
$$
which is a special case of Koksma's inequality, the result follows.
\end{proof}

Now we are ready to prove the main theorem.

\begin{proof}[Proof of the Theorem]
Let $\left(m_{j}\right)_{j \geq 1}$ and $\left(e_{j}\right)_{j \geq 1}$ be two strictly increasing sequences of positive integers, which will be determined later. We will consider the following strictly increasing sequence of positive integers, which will be our sequence $\left(a_{n}\right)_{n \geq 1}$:
\begin{eqnarray*}
&& 1,~2,~3,~ \ldots,~ \underbrace{m_{1}}_{ =: A_{1}},\\
&& m_{1}+1^{2},~ m_{1} +2^{2},~ m_{1}+3^{2},~ m_{1}+4^{2},~ \ldots,~ \underbrace{m_{1} + {e_{1}}^{2}}_{ := B_{1}},\\
&& B_{1}+1,~ B_{1}+2,~ B_{1}+3,~ \ldots,~ \underbrace{B_{1}+m_{2}}_{ =: A_{2}},\\
&& A_{2} +1^{2},~ A_{2}+2^{2},~ A_{2}+3^{2},~ A_{2}+4^{2},~ \ldots,~ \underbrace{A_{2} + {e_{2}}^{2}}_{ =: B_{2}},\\
&& B_{2}+1,~ B_{2}+2,~ B_{2}+3,~ \ldots,~ \underbrace{B_{2}+m_{3}}_{ =: A_{3}},\\
&& A_{3} +1^{2},~ A_{3}+2^{2},~ A_{3}+3^{2},~ A_{3}+4^{2},~ \ldots,~ \underbrace{A_{3} + {e_{3}}^{2}}_{ =: B_{3}},\\
&& \vdots \\
\end{eqnarray*}
Furthermore, let 
$$
F_{s}:= \sum^{s}_{i=1} m_{i} + \sum^{s-1}_{i=1} e_{i} \qquad \textrm{and} \qquad E_{s} := \sum^{s}_{i=1} m_{i} + \sum^{s}_{i=1} e_{i}.
$$
The sequence $(a_n)_{n \geq 1}$ is constructed in such a way that it contains sections where it grows like $(n)_{n \geq 1}$ as well as sections where it grows like $(n^2)_{n \geq 1}$. By this construction we exploit both the strong upper bounds for the discrepancy of $(\{n \alpha\})_{n \geq 1}$ and the strong lower bounds for the discrepancy of $(\{n^2 \alpha\})_{n \geq 1}$, in an appropriately balanced way, in order to obtain the desired discrepancy behavior of the sequence $(\{a_n \alpha\})_{n \geq 1}$. In our argument we will repeatedly make use of the fact that
\begin{equation} \label{E*}
D_N (x_1, \dots, x_N) = D_N(\{x_1 + \beta\}, \dots, \{x_N + \beta\})
\end{equation}
for arbitrary $x_1, \dots, x_N \in [0,1]$ and $\beta \in \mathbb{R}$, which allows us to transfer the discrepancy bounds for $(\{n \alpha\})_{n \geq 1}$ and $(\{n^2 \alpha\})_{n \geq 1}$ directly to the shifted sequences $(\{(M+n) \alpha\})_{n \geq 1}$ and $(\{(M+n^2) \alpha\})_{n \geq 1}$ for some integer $M$.\\

Let $\alpha$ be such that it satisfies~\eqref{equ_c} with $\varepsilon=\frac{1}{2}$. Then it is also well-known (see for example~\cite{knu}) that for the discrepancy $D_{N}$ of the sequence $\left(\left\{n \alpha\right\}\right)_{n \geq 1}$ we have 
\begin{equation} \label{equ_d}
ND_{N} \leq \overline{c}_{1} \left(\alpha \right) \left(\log N\right)^{\frac{3}{2}}
\end{equation}
for all $N \geq 2$.\\

By the above mentioned general result of Baker, that is by~\eqref{B*}, we know that for almost all $\alpha$ for the discrepancy $D_{N}$ of the sequence $\left(\left\{n^{2} \alpha \right\}\right)_{n\geq 1}$ we have
$$
ND_{N} \leq c_{3} \left(\alpha, \varepsilon \right) N^{\frac{1}{2}} \left(\log N\right)^{\frac{3}{2}+\varepsilon}
$$
for all $\varepsilon > 0$ and for all $N \geq 2$, for an appropriate constant $c_{3} \left(\alpha, \varepsilon \right)$. Actually an even slightly sharper estimate was given for the special case of the sequence $(\{n^2 \alpha\})_{n \geq 1}$ by Fiedler, Jurkat and K\"orner in~\cite{FJK}, who proved that
\begin{equation} \label{equ_e}
ND_{N} \leq c_{4} \left(\alpha, \varepsilon\right) N^{\frac{1}{2}} \left(\log N\right)^{\frac{1}{4}+\varepsilon}
\end{equation}
for almost all $\alpha$ and for all $\varepsilon > 0$ and all $N \geq 2$.\\

Assume that $\alpha$ satisfies~\eqref{equ_e} with $\varepsilon = \frac{1}{8}$. Then
\begin{equation} \label{equ_f}
ND_{N} \leq \overline{c}_{2} \left(\alpha\right) N^{\frac{1}{2}} \left(\log N\right)^{\frac{3}{8}}
\end{equation}
for all $N \geq 2$. Now for such $\alpha$ and for arbitrary $N$ we consider the discrepancy $D_{N}$ of the sequence $\left(\left\{a_{n} \alpha\right\}\right)_{n \geq 1}$.\\

\emph{Case~1.}\\
Let $N=F_{l}$ for some $l$. Then $ND_{N} \leq E_{l-1} D_{E_{l-1}}+\left(N-E_{l-1}\right) D_{E_{l-1},F_{l}},$ where $D_{x,y}$ denotes the discrepancy of the point set $\left(\left\{a_{n} \alpha \right\}\right)_{n=x+1, x+2, \ldots, y}$. Hence by~\eqref{E*},~\eqref{equ_d} and by the trivial estimate $D_{B_{l-1}} \leq 1$ we have
\begin{eqnarray*}
ND_{N} & \leq & E_{l-1} + \overline{c}_{1} \left(\alpha\right) \left(\log m_{l}\right)^{\frac{3}{2}}\\
& \leq & 2 \left(\log m_{l}\right)^{2}\\
& \leq & 2 \left(\log N\right)^{2}
\end{eqnarray*}
for all $l$ large enough, provided that (condition (i)) $m_{l}$ is chosen such that $\left(\log m_{l}\right)^{2} \geq~E_{l-1}$.\\

\emph{Case~2.}\\
Let $F_{l} < N \leq E_{l}$ for some $l$. Then by Case~1 and by~\eqref{E*} and~\eqref{equ_f} we have for $l$ large enough that
\begin{eqnarray*}
ND_{N} & \leq & F_{l} D_{F_{l}} + \left(N-F_{l}\right)D_{F_{l},N}\\
& \leq & 2 \left(\log F_{l}\right)^{2} + \overline{c}_{2} \left(\alpha\right) \left(N-F_{l}\right)^{\frac{1}{2}} \left(\log \left(N-F_{l}\right)\right)^{\frac{3}{8}}.
\end{eqnarray*}
Note that $0<N-F_{l} < e_{l}$.\\

We choose (condition (ii))
\begin{equation} \label{equ_g}
e_{l} := \left\lceil \frac{{F_{l}}^{2 \gamma}}{\log \left({F_{l}}^{2 \gamma}\right)}\right\rceil.
\end{equation}
Note that conditions (i) and (ii) do not depend on $\alpha$. Now assume that $l$ is so large that $2 \left(\log F_{l}\right)^{2} < \frac{{F_{l}}^{\gamma}}{2}$. Then
$$
\frac{{F_{l}^{\gamma}}}{2} \leq 2 \left(\log F_{l}\right)^{2} + \left(e_{l} \log e_{l}\right)^\frac{1}{2} \leq 2 {F_{l}^{\gamma}}
$$
and (note that $\gamma \leq \frac{1}{2}$)
\begin{equation} \label{equ_h}
F_{l} < N \leq E_{l} = F_{l} + e_{l} \leq 2 F_{l}.
\end{equation}
Hence
\begin{eqnarray*}
ND_{N} & \leq & \max\left(1, \overline{c}_{2} \left(\alpha\right)\right) 2 {F_{l}^{\gamma}}\\
& \leq & \max\left(1, \overline{c}_{2} \left(\alpha\right)\right) 2^{1+\gamma}  N^{\gamma}.
\end{eqnarray*}

\emph{Case~3.}\\
Let $E_{l} < N < F_{l+1}$ for some $l$. Then by Case 2 and by~\eqref{E*} and~\eqref{equ_d} we have
\begin{eqnarray*}
ND_{N} & \leq & E_{l} D_{E_{l}} + \left(N-E_{l}\right) D_{E_{l}, N}\\
& \leq & {E_{l}^{\gamma}} + \overline{c}_{1} \left(\alpha\right) \left(\log \left(N-E_{l}\right)\right)^{2}\\
& \leq & 2N^{\gamma}
\end{eqnarray*}
for $N$ large enough.\\

It remains to show that for every $\varepsilon > 0$ we have $ND_{N} \geq N^{\gamma-\varepsilon}$ for infinitely many $N$. Let $l$ be given and let $M_{l} \leq e_{l}$ with the properties given in Lemma~\ref{lem_a}. Let $N:= F_{l}+M_{l}$. Then by Lemma~\ref{lem_a}, Case~1,~\eqref{E*},~\eqref{equ_g} and~\eqref{equ_h} for $l$ large enough we have
\begin{eqnarray*}
ND_{N} & \geq & M_{l} D_{F_{l}, N} -F_{l} D_{F_{l}}\\
& \geq & K\left(\alpha, \varepsilon\right) \sqrt{\frac{e_{l}}{\left(\log e_{l}\right)^{1+\varepsilon}}}-2 \left(\log m_{l}\right)^{2}\\
& \geq & \frac{{F_{l}^{\gamma}}}{\left(\log F_{l}\right)^{3}} \\
& \geq & N^{\gamma-\varepsilon}.
\end{eqnarray*}
This proves the theorem.
\end{proof}

\end{document}